\author{Umar Hayat}
\title{The Cramer Varieties $Cr(r,r+s,s)$}
\date{Wednesday, May 29, 2013}
\address{Umar Hayat, School of Mathematical Sciences, University College Dublin, Belfield, Dublin 4, Ireland}
\address{Department of Mathematics, Quaid-iAzam University, Islamabad, Pakistan}
\email{umarmaths@gmail.com}
\newcommand{\GL}{GL}
\newcommand{\diago}{diagonal}
\newcommand{\Sym}{S}
\newcommand{\Hom}{Hom}
\newcommand{\Gr}{Gr}
\newcommand{\Stab}{Stab}
\newcommand{\Cr}{Cr}
\newcommand{\OGr}{OGr}
\newcommand{\IC}{\mathbb{C}}
\newcommand{\IT}{\mathbb{T}}
\newcommand{\cO}{\mathcal{O}}
\newtheorem{pro}{Proposition}
\newtheorem{thm}{Theorem}
\begin{document}
\begin{abstract}
In this paper we study quasi-homogeneous affine
algebraic varieties,  that is,  varieties obtained
as closures of orbits of suitable group representations. We also discuss one interesting case that has links with the Orthogonal Grassmannian $\OGr(5,10)$. The main aim is to write the tangent bundle and the canonical class of quasi-homogeneous affine algebraic varieties in terms of group representations.

\end{abstract}

\subjclass[2010]{Primary 14M17,14J60; Secondary 20C15}

\keywords{Quasi-homogeneous spaces, Vector bundles, Weyl group, Orthogonal Grassmannian, Canonical class}

\maketitle

\pagestyle{myheadings}

\markboth{UMAR HAYAT}{The Cramer Varieties $Cr(r,r+s,s)$}

\section{Introduction}
\label{sec-intro}
A homogeneous space for an algebraic group $G$ is a space $M$ with a transitive action of $G$ on $M$. Equivalently, it is a space of the form $G/H$, where $G$ is an algebraic group and $H$ a closed subgroup of $G$. Homogeneous spaces play a vital role in the representation theory of the algebraic group because representations are often realised as the space of sections of vector bundles over homogeneous spaces. Homogeneous spaces have been studied in different context but not much is known about quasi-homogeneous spaces.

Reid and Corti \cite{RC} studied weighted analogs of the homogeneous spaces such as Grassmannian $\Gr(2,5)$ and the Orthogonal Grassmanian $\OGr(5,10)$ and how to use these as weighted projective  constructions. 

In this paper, we study quasi-homogeneous spaces arising from two sets of matrix equations. Let $M$ and $N$ be $r\times t$ and $t\times s$ matrices with $t=r+s$, $r\leq s$. We define a variety $V \subset \IC^{(r\times s)+(s\times t)+1}$ by the two sets of equations
$$
MN=0 \qquad \text{ and } \qquad \omega \bigwedge^{r}M=\bigwedge^{s}N,
$$
where $\omega \in \IC $ and $ {\bigwedge } ^{r} M$, $\bigwedge^{s}N$ denote the $r\times r$ and $s\times s$ minors of $M$ and $N$ respectively. We follow the method given in \cite{HOC} to equate $\omega$ times the $r\times r$ minors of $M$ with the $s\times s$ minors of $N$. We denote these varieties by $\Cr(r,r+s,s)$ and call them Cramer varieties.

The case $r=3$ and $s=1$ is the original codimension $4$ example in the first paper of Kustin and Miller \cite{KM}. Understanding this case led them to the more general notion of Gorenstein unprojection. For $r=4$ and $s=1$, these varieties in terms of  equations are unprojections and have been studied by Papadakis and Reid, see \cite{PR, PR1}. In fact for $s=1$ the variety is a single unprojection because all $x_{i}=0$ is a codimension $(r+1)$ complete intersection $D$ and all $\sum m_{ij}x_{j}=0$ is a codimension $r$ complete intersection $X$ containing $D$. So Kustin-Miller unprojection applies to give $\omega$ as an unprojection variable with the second set of equations as unprojection equations. Also Hochster \cite{HOC} studied these examples in relation to the variety of complexes.\\

We prove that the canonical divisor of the  Cramer variety $\Cr(r,t,s)$ is Cartier in Proposition $1$. Our main result is Theorem $1$ in section $3$ that gives criteria to calculate the canonical class of the $\Cr(r,t,s)$ in terms of weights. 

\section{The variety $\Cr(r,r+s,s)$ in equations}
 Let $M$ and $N$ be the $r\times t$ and $t\times s$ matrices as given below:

\[
 M =
 \begin{pmatrix}
  m_{11} & m_{12} & \cdots & m_{1t} \\
  \vdots  & \vdots  & \ddots  & \vdots \\ 
  m_{r1} & m_{r2} & \cdots & m_{rt} \\
   \end{pmatrix}
   \qquad
\text{and }
\qquad
 N =
 \begin{pmatrix}
  n_{11} & n_{12} & \cdots  & n_{1s} \\
   \vdots  & \vdots  & \ddots& \vdots    \\
  n_{t1} & n_{t2} & \cdots & n_{ts}\\
 \end{pmatrix}.
\]

We define a variety $V \subset \IC^{(r\times t)+(t\times s)+1}$ by the two sets of equations
$$
MN=0 \qquad \text{ and } \qquad \omega \bigwedge^{r}M=\bigwedge^{t}N,
$$
where $\omega\in \IC$ and $\bigwedge^{r}M$, $\bigwedge^{s}N$ denote the $r\times r$ and $s\times s$ minors of $M$ and $N$ respectively. We follow the method given in \cite{HOC} to equate $\omega$ times the $r \times r$ minors of $M$ with the $s\times s$ minors of $N$. Let $T=\{1, \dots, t\}$ be a set of $t$ elements and $T_{r}$ be a subset of any $r$ elements of $T$. Let $M_{1, \dots, r}$ be the minor of $M$ obtained from any $r$ columns of $M$ and let $N_{\widehat{1, \dots, r}}$ be the minor of $N$ obtained by deleting the $r$ rows of $N$. We equate these minors as follows
$$
(-1)^{\Sigma} \omega M_{1, \dots, r}= N_{\widehat{1, \dots, r}},
$$
where $\Sigma= \sum t_{i}$.
Now if we consider 
$$
V_{0}= \bigg\{( M, N, \omega ) : \text{rank of } M=  r, \text{rank of } N= s  \text{  and } \omega\neq 0 \bigg\}, 
$$ 
then  $V_{0}$ has codimension $ts+1-s^{2}$ in $\IC^{(r\times t)+(t\times s)+1}$ and $V=\overline{V_{0}}$. The $V_{0}$ is a homogeneous space, the orbit of the vector  
\[
\left(
 M_{0} =
 \begin{pmatrix}
  I_{r\times r} & 0_{r\times s}  \\
     \end{pmatrix}
\text{, }
\quad
 N_{0} =
 \begin{pmatrix}
  0_{r\times s} \\
  I_{s\times s}  \\
   \end{pmatrix}
   \quad
   \text{ and } 
   \omega=1
   \right).
\]
under the action of $G=\GL(r)\times \GL(t)\times \GL(s)$ as explained in section \ref{s!Qausi}.

When $M$ and $N$ are of  maximal rank then we can assume the first minor $M_{1\cdots r}$ of $M$ is nonzero. We can use that to solve the top $r$ rows of $N$ and $\omega$ in terms of remaining entries of $N$ and $M$.

   Let $S=V\setminus V_{0}$ be the complement of $V_{0}$ in $V$. If $(M, N, \omega)\in S$ then for $\bigwedge^{s}N=0$ we have three possibilities for the elements of $S$:

\begin{enumerate}
  \item either the rank of $M$ is full and $\omega=0$; 
  \item or the rank of $M$ is strictly less than r and $\omega\neq0$;
  \item or the rank of $M$ is strictly less than r and $\omega=0$. 
\end{enumerate}
In case $1$, when $M$ is of maximal rank then one of the minors of $M$ is nonzero. For $N$ of rank $s-1$, we get a codimension one irreducible variety, say $V_{1}$. A typical element of a divisor $V_{1}$ looks like 
\[
\left(
 M_{0} =
 \begin{pmatrix}
  I_{r\times r} & 0_{r\times s}  \\
     \end{pmatrix}
\text{, }
\quad
 N_{0} =
 \begin{pmatrix}
  0_{r+1\times s} \\
  I_{s-1\times s-1}  \\
   \end{pmatrix}
   \quad
   \text{ and } 
   \omega=0
   \right).
\]
 
In all other cases where rank $N\leq s-1$ or rank $M\leq r-1$  the codimension is greater than or equal to two so we are not worried about that part of $S$: these subvarieties are not divisorial so do not appear in the canonical class.
\subsection{The canonical class of $\Cr(r,r+s,s)$}\label{s!KV}
 Suppose that the first minor $M_{1, \cdots ,r}$ of $M$ is  nonzero. Then we can write entries of first $r$ rows of $N$, $n_{11}$,$\cdots$  ,$n_{1s}$, $n_{21}$, $n_{22}$,$\cdots$ ,$n_{2s}$, $\cdots$ $n_{r1}$, $n_{r2}$,$\cdots$ ,$n_{rs}$  and $\omega$ in terms of remaining entries of $M$ and $N$. 
 Similarly if we assume that the minor $M_{2,\cdots ,r+1}$ of $M$ is nonzero then we can solve for $n_{21}$,$\cdots$  ,$n_{2s}$, $n_{31}$, $n_{32}$,$\cdots$ ,$n_{3s}$, $\cdots$ ,$n_{r+1,1}$, $n_{r+1,2}$,$\cdots$ ,$n_{r+1,s}$  and $\omega$. In the same way if we assume that any $r\times r$  minor  of $M$ is nonzero then we can use that to solve for the $r$ rows of $N$ and $\omega$ where the coordinates will be remaining entries of $M$ and $N$.
 
Let $U_{M_{1,\cdots , r}\neq 0}$ and $U_{M_{2, \cdots , r+1}\neq 0}$  be the two charts for $V$ with coordinates \linebreak
$\xi_{1},\dots, \xi_{rt}, \xi_{rt+1} \cdots , \xi_{rt+s^{2}}$ and $\eta_{1},\dots, \eta_{rt}, \eta_{rt+1}, \eta_{rt+s^{2}}.$
There are $rt+s^{2}-s$ coordinates common to both charts because these charts only differ by one row of $N$. The change of coordinates from one chart to other is given by the $rt+s^{2}$ Jacobian matrix $J$ whose first $rt+s^{2}-s$ block is $I_{rt+s^{2}-s\times rt+s^{2}-s}$:
\[
J=
 \begin{pmatrix} 
 I_{rt+s^{2}-s\times rt+s^{2}-s} & 0_{rt+s^{2}-s \times s}\\
 0_{s \times rt+s^{2}-s} & C\\
  \end{pmatrix},  
  \] 
where $C$ is the $s\times s$ diagonal matrix whose diagonal entries are $\dfrac{ M_{2, \cdots, r+1}}{ M_{1,\cdots, r}}$   and the determinant of $J$ is $\bigg(\dfrac{ M_{2,\cdots, r+1}}{ M_{1,\cdots, r}}\bigg) ^{s}$.

The sheaf of the canonical differentials is 
 $$
\cO(K_{V})=\bigwedge^{rt+s^{2}}\Omega^{1}_{V} \text{ and }
\cO(K_{V})\mid_{U_{M_{1,\cdots, r}\neq 0}}=\cO_{U_{M_{1, \cdots, r}\neq 0}}\cdot \sigma_{1, \cdots, r} 
$$
where
\begin{equation}\label{s12}
 \sigma_{1, \cdots, r}= \dfrac{d\xi_{1}\wedge \dots \wedge d\xi_{rt+s^{2}}}{(M_{1,\cdots, r})^{s}} \text{ and similarly }  \sigma_{2, \cdots, r+1}= \dfrac{d\eta_{1}\wedge \dots \wedge d\eta_{rt+s^{2}}}{(M_{2, \cdots, r+1})^{s}}.
 \end{equation}
The minor $M_{1,\cdots, r}$ is invertible on $M_{1, \cdots ,r}$ and putting $(M_{1,\cdots, r})^{s}$ in the denominator is a convenient trick to cancel out the Jacobian matrix, which will appear again later. The above calculation of the Jacobian determinant shows that $\sigma_{1,\cdots, r}= \sigma_{2,\cdots, r+1}$ and repeating the same calculation defines $\sigma= \sigma_{i_{1}, \cdots, i_{r}}$  independently of $i_{1}, \cdots, i_{r}$. Since $\sigma_{i_{1}, \cdots, i_{r}}$ is a basis of $\bigwedge^{rt+s^{2}}\Omega^{1}_{V}$ and has no zeros or poles, exactly because of the $M_{i_{1}, \cdots, i_{r}}$ in the denominator, we have   $$
K_{V}=divisor(\sigma)=0.
$$
\begin{pro}
{\it The canonical divisor of the  Cramer variety $\Cr(r,t,s)$ is Cartier}.

\end{pro}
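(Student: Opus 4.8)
The plan is to treat the Proposition as a short corollary of the chart computation carried out immediately above. Recall that on a normal variety a Weil divisor $D$ is Cartier exactly when the associated divisorial (reflexive, rank one) sheaf $\cO_V(D)$ is invertible, and that a trivial sheaf is in particular invertible. So it suffices to promote the local trivialisations $\cO_V(K_V)\mid_{U_{M_{i_1,\cdots,i_r}\neq 0}}=\cO\cdot\sigma_{i_1,\cdots,i_r}$, which we already have, to a global trivialisation of $\cO_V(K_V)$; equivalently, to show that $K_V=\operatorname{div}(\sigma)=0$ honestly as a Weil divisor on all of $V$, and not merely on the union of the charts.

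First I would pin down the open locus on which the computation lives. As $\{i_1,\dots,i_r\}$ ranges over the $r$-element subsets of $T$, the charts $U_{M_{i_1,\cdots,i_r}\neq 0}$ cover $V^{0}:=\{(M,N,\omega)\in V:\operatorname{rank} M=r\}$, and on each such chart $V$ is smooth of dimension $rt+s^{2}$, since one solves the top $r$ rows of $N$ and $\omega$ rationally from a nonzero $r\times r$ minor of $M$, exhibiting the chart as an open subset of an affine space. By the very choice of denominator in \eqref{s12}, $\sigma$ is then a regular, nowhere-vanishing generator of the canonical bundle over $V^{0}$, so $\cO_V(K_V)\mid_{V^{0}}\cong\cO_{V^{0}}$ and $K_V\mid_{V^{0}}=0$. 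Next I would invoke the stratification of $S=V\setminus V_{0}$ from \S2: the only divisorial component of $S$ is $V_{1}$, on which $\operatorname{rank} M=r$, so $V_{1}\subset V^{0}$; every other stratum (where $\operatorname{rank} M\le r-1$, and likewise where $\operatorname{rank} N\le s-2$) has codimension at least two in $V$. Hence $V\setminus V^{0}=\{\operatorname{rank} M\le r-1\}$ has codimension $\ge 2$ and contains no codimension-one point of $V$. Since a Weil divisor is a finite integral combination of prime divisors and $\operatorname{div}(\sigma)$ has coefficient $0$ along every prime divisor of $V$ (each one meets $V^{0}$), we conclude $K_V=\operatorname{div}(\sigma)=0$; equivalently, as $V$ is normal, $\cO_V(K_V)\cong j_{*}\!\left(\cO_V(K_V)\mid_{V^{0}}\right)\cong j_{*}\cO_{V^{0}}$ for $j\colon V^{0}\hookrightarrow V$, so $\sigma$ extends to a global generator. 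A principal divisor — and in particular the zero divisor, whose sheaf is $\cO_V$ — is Cartier, which is the assertion.

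The calculations involved are routine; the point that needs care is the geometric input. I would need $V$ to be a normal variety, so that the canonical divisor is defined in the usual way and the codimension-one argument is legitimate: this follows from the description of $V$ through the variety-of-complexes equations, where Cohen–Macaulayness is classical (cf.\ \cite{HOC}) and regularity in codimension one is supplied by the smooth charts above. I would also rely on the claim from \S2 that $V_{1}$ is the unique divisorial component of the boundary $S$ and that the remaining rank-drop loci have codimension $\ge 2$ in $V$. Granting these two facts, the Proposition follows at once.
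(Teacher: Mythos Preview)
Your argument is correct and in fact more careful than the paper's own proof. The paper simply recalls the definition of a Cartier divisor as local data $(U_i,f_i)$ with $f_if_j^{-1}\in\cO^{\ast}(U_i\cap U_j)$, declares $\{U_{M_{i_1,\dots,i_r}\neq 0}\}$ to be an open cover of $\Cr(r,t,s)$ with local equations $1/M_{i_1,\dots,i_r}$, and checks that the ratios $M_{i_1,\dots,i_r}^{\,s}/M_{j_1,\dots,j_r}^{\,s}$ are units on the overlaps---essentially restating the Jacobian computation already made before the Proposition. It does not address the locus $\{\operatorname{rank} M<r\}$, which those charts do not cover. Your route---trivialising $\cO_V(K_V)$ on $V^{0}$ by $\sigma$ and then pushing across the codimension~$\geq 2$ complement via normality to get $K_V=0$ globally---fills exactly this gap, at the price of importing normality of $V$ (which you rightly flag as requiring Cohen--Macaulayness from \cite{HOC} together with regularity in codimension one from the smooth charts). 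So the underlying idea is the same chart trivialisation; your version is the one that actually accounts for the whole variety.
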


\begin{proof}
A Cartier divisor $K$ on a variety is an open cover $\{(U_{i})\}$ and rational functions $f_{i} \in k(U_{i})^{\ast}$ such that for all $i$, $j$, $f_{i} f_{j}^{-1} \in \cO^{\ast}(U_{i}\cap U_{j})$. We have an open cover  $\{ (U_{M_{i_{1}, \cdots, i_{r}}}) \}$ for $\Cr(r,t,s)$, with transition functions $\dfrac{1}{M_{i_{1}, \cdots, i_{r}}} \in k(U_{M_{i_{1}, \cdots, i_{r}} })^{\ast}$. Note that  $\dfrac{M_{1,\cdots, r}^{s}}{M_{2, \cdots , r+1}^{s}} \in \cO^{*}(U_{M_{1, \cdots, r} }\cap U_{M_{2, \cdots, r+1} })= \cO^{*}(U_{M_{1, \cdots, r} },_{M_{2, \cdots,r+1 } })$. Hence $K$ is Cartier for $\Cr(r,t,s)$. 
\end{proof}
\section{The variety $\Cr(r,t,s)$ as a quasi-homogeneous space}\label{s!Qausi}
Our aim is to study the variety $V$ as the closure of the orbit of a special vector. Let $G=\GL(r)\times \GL(t)\times \GL(s)$ which is a reductive algebraic group. Let $W_{r}$, $W_{t}$ and $W_{s}$ be the given $r$, $t$ and $s$-dimensional representations of $\GL(r)$, $\GL(t)$ and $\GL(s)$ respectively.

We want to define an action of $G=\GL(r)\times \GL(t)\times \GL(s)$ on the representation $R=\Hom(W_{r},W_{t})\oplus\Hom(W_{t}, W_{s})\oplus \IC$ such that the variety $V$ is invariant under this action. In coordinate-free terms, $M\in\Hom(W_{r},W_{t})$, $N\in\Hom(W_{t}, W_{s})$ and $\omega\in\IC$ and the action of $(A,B,C)\in G$ with $A\in\GL(r)$, $B\in\GL(t)$, $C\in\GL(s)$ is defined  as follows,
\begin{align*}
M &\longmapsto AMB^{-1} \\
N &\longmapsto BNC^{-1} \\
\omega &\longmapsto \lambda \omega,  \text{ where } \lambda =\dfrac{\det(B)}{\det(A)\times \det(C)}.
\end{align*}
Let $M$ and $N$ be matrices of maximal rank. By using row and column operations we can write $M$ and $N$ as follows
\[
\left(
 M_{0} =
 \begin{pmatrix}
  I_{r\times r} & 0_{r\times s}  \\
     \end{pmatrix}
\text{, }
\quad
 N_{0} =
 \begin{pmatrix}
  0_{r\times s} \\
  I_{s\times s}  \\
   \end{pmatrix}
   \quad
   \right).
\]
The stabiliser of $v=\left(  M_{0},\text{ } N_{0}, \text{ } \omega=1 \right) $ is 
$$
H=\Stab(v)=\left\{\left( A,B, C\right) \text{ }\bigg \vert 
 B =
 \begin{pmatrix}
 A & 0 \\ 
 * & C\\
 \end{pmatrix}
 \right\}, 
$$
where ($*$) means there is no restriction on this block. One can observe that $v$ is not the highest weight vector, it is not even a weight vector. In this part we consider the open orbit 
$$
V_{0}=G/H\simeq G \cdot v \hookrightarrow R.
$$
Cramer variety $\Cr(r,t,s)$ is a quasi-homogeneous space with the natural action of $G$ and 
$V=\overline{G \cdot v}.
$
\subsection{The Weyl group $W(G)$}
To study the algebraic group $G=\GL(r)\times \GL(t)\times \GL(s)$ and its representations we use the Weyl group  $W(G)\cong \Sym_{r}\times \Sym_{t}\times \Sym_{s}$ which acts as a permutation group. We know from section \ref{s!Qausi}  that  $R=\Hom(W_{r},W_{t})\oplus\Hom(W_{t}, W_{s})\oplus \IC$ is a representation of $G$. We describe here how the Weyl group  acts on $R$. The group $\Sym_{r}$ acts on any $M\in \Hom(W_{r},W_{t})$ from the left and permutes the rows while $\Sym_{t}$ acts on the right and permutes the columns.
Similarly, $\Sym_{t}$ acts on $N\in \Hom(W_{t}, W_{s})$ from the left and permutes the rows and $\Sym_{s}$ acts on right and permutes the columns.
\subsection{The torus action and Weyl group}
If $\IT\subset G$ is the maximal torus given by
\begin{multline*}
\Biggl[  T_{A}=
\begin{pmatrix}
  a_{11} & 0 & \cdots & 0 \\
  0 & a_{22} & \cdots & 0 \\
  \vdots  & \vdots  & \ddots & \vdots \\
  0 & 0 & \cdots & a_{rr} \\
  \end{pmatrix}, \text{  }
T_{B}= 
 \begin{pmatrix}
  b_{11} & 0 & \cdots & 0 \\
  0 & b_{22} & \cdots & 0 \\
  \vdots  & \vdots  & \ddots & \vdots \\
  0 & 0 & \cdots & b_{tt} \\
  \end{pmatrix},
  \text{  }\\
      T_{C}= 
 \begin{pmatrix}
  c_{11} & 0 & \cdots & 0 \\
  0 & c_{22} & \cdots & 0 \\
  \vdots  & \vdots  & \ddots & \vdots \\
  0 & 0 & \cdots & c_{ss} \\
  \end{pmatrix} 
 \Biggr] ,
\end{multline*}  
 then it acts on 
 \[
 M_{11} =
 \begin{pmatrix}
  m_{11} & 0_{1\times t-1} \\
  0_{r-1\times 1} &  0_{r-1\times s-1}\\
   \end{pmatrix}
   \quad
\text{and }
\quad
 N_{11} =
 \begin{pmatrix}
  n_{11} & 0_{1\times s-1} \\
  0_{t-1\times 1} & 0_{t-1\times s-1}\\
 
 \end{pmatrix}
 \quad
 \text{ and } \omega
\] 
as explained in section $3$. Under this action $M_{11}$, $N_{11}$ and $\omega$ are the weight vectors with weights $\frac{a_{11}}{b_{11}}$, $\frac{b_{11}}{c_{11}}$ and $\det(T_{B})\times (\det T_{A})^{-1}\times (\det T_{C})^{-1}$ respectively.

\subsection{One parameter subgroup and elements of $V \setminus V_{0}$} 
 Let \linebreak $P(t)= \left(  T_{A}(t), T_{B}(t) \text{ and } T_{C}(t) \right) $ be the one parameter subgroup given by 
 \begin{multline*}
\Biggl[  T_{A}(t)=I_{r\times r}
, \text{  }
T_{B}(t)= \diago(T_{A})(1,...,1,t_{r+1,r+1},1,...,1)
 ,
  \text{}
 T_{C}(t)= I_{s\times s}  
 \Biggr].
\end{multline*}

In section $2$ we showed that we have only one irreducible divisor $V\backslash V_{0}$, say $V_{1}$. We want to show that $V_{1}\subset \overline{G \cdot v}$. For this we show that there exists a one parameter subgroup $P(t)$  such that $V_{1} \subset \overline{P(t)\cdot v}$. \\
In fact the one parameter subgroup $P(t)= \left(  T_{A}(t), T_{B}(t) \text{ and } T_{C}(t) \right) $  acts on 
\[
\left(
 M_{0} =
 \begin{pmatrix}
  I_{r\times r} & 0_{r\times s}  \\
     \end{pmatrix}
\text{, }
\quad
 N_{0} =
 \begin{pmatrix}
  0_{r\times s} \\
  I_{s\times s}  \\
   \end{pmatrix}
   \quad
   \text{ and } 
   \omega=1
   \right).
\]

 and we get 
\[
\left(
 M_{0} =
 \begin{pmatrix}
  I_{r\times r} & 0_{r\times s}  \\
     \end{pmatrix}
\text{, }
\quad
 N_{0} =
 \begin{pmatrix}
  0_{r\times s} \\
  t_{r+1,r+1} & 0_{1\times s-1} \\
  I_{s-1\times s}  \\
   \end{pmatrix}
   \quad
   \text{ and } 
   \omega=t
   \right).
\]
 
Therefore we have shown that there exists one parameter subgroup $P(t)$ such that $V_{1}\subset\overline{P(t) \cdot v}$. That is for $t\rightarrow 0$ along the one parameter subgroup we get the typical vector of $V_{1}$. 
\subsection{The relationship between the canonical class of $\Cr(r,t,s)$ and representations of $G$}

In this section we observe the relationship between the tangent bundle and  the canonical class to the open orbit and the representations of $G$. We know from  section $2$ that the tangent bundle $T_{V}$ to $V$ is a vector bundle of rank $rt+s^{2}$ and  top wedge of its dual is equal to 
$$
\bigwedge^{rt+s^{2}}T^{\vee}_{V}=\bigwedge^{rt}\Hom(W_{r},W_{t})\otimes\bigwedge^{s^{2}}\Hom(W^{\prime}_{s}\subset W_{t},W_{s})
$$
where $W_{r}$, $W_{s}$, $W^{\prime}_{s}$ and $W_{t}$ are the given $r$, $s$, $s$ and $t$-dimensional $\GL(r)$-, $\GL(s)$-, $\GL(s)$- and $\GL(t)$- representations. 
\subsection{The canonical class in terms of weights}
The canonical differential  $\sigma_{1, \cdots, r}$  in equation (\ref{s12}) is a weight vector for the maximal $\IT \subset G$ with weight \linebreak 
$\dfrac{(\det(A))^{r}\times \det(B)}{(\det(C))^{s}}$ and similarly all the $\sigma= \sigma_{i_{1}, \cdots, i_{r}}$ are weight vectors with  weight $\dfrac{(\det(A))^{r}\times \det(B)}{(\det(C))^{s}}$. The only problem is $\IT \subset G$ does not normalise the stabiliser $H$. If we take the restricted torus $\IT_{H}=\IT \cap N_{H}$, where $N_{H}$ is the normaliser of $H$ then $\IT_{H}$ 
$$
\left\{  T_{A}, \text{  }
T_{B}= 
 \begin{pmatrix}
  T_{A} &  0_{r\times s} \\
  0_{s\times r}  & T_{C} \\
    \end{pmatrix},
  \text{  }  
  T_{C}  
 \right\} 
 $$ 
 acts on the canonical differential  $\sigma_{1, \cdots, r}$  in equation (1) and $\sigma_{1, \cdots, r}$ is a weight vector with weight 
 $\dfrac{(\det\IT_{A})^{s}}{(\det\IT_{C})^{r}}$.
 
 Let $\mathfrak{h}$ and $\mathfrak{g}$ be the Lie algebras of $H$ and $G$ respectively. The tangent bundle $T_{G/H}$ to $G/H$ comes from the representation $\mathfrak{g/h}$: $\mathfrak{g/h}$ is the tangent space to $G/H$ at the identity $H$, and the tangent space to any other $gH\in G/H$ is given by $\mathfrak{g}/g\mathfrak{h}g^{-1}$. The canonical class of the variety $G/H$ is  
\[
K_{G/H}= divisor(\overset{rt+s^{2}}{\bigwedge} T^{\vee}_{G/H}). 
\]
The canonical differential $K_{G/H}$ is a weight vector for $\IT_{H}$ and its weight is exactly the product of those weights of $G$ which are not weights of $H$. 
\begin{thm}
{\it The weight of the canonical differential $K_{G/H}$ is the determinant of the restricted torus.}
\end{thm}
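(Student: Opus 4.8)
The plan is to read the weight of $K_{G/H}$ off the $\IT_{H}$-module $\mathfrak{g}/\mathfrak{h}$. By the remark preceding the theorem, $\cO(K_{G/H})$ has fibre $\bigwedge^{rt+s^{2}}(\mathfrak{g}/\mathfrak{h})^{\vee}$ over the base point $eH$ (because $T_{G/H}$ comes from $\mathfrak{g}/\mathfrak{h}$), and as $\IT$ does not normalise $H$ this fibre carries a well-defined character only of the restricted torus $\IT_{H}=\IT\cap N_{H}$; that character is the determinant of the $\IT_{H}$-action on $\mathfrak{g}/\mathfrak{h}$, equivalently the product of the weights of $\mathfrak{g}$ not occurring in $\mathfrak{h}$. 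From the short exact sequence $0\to\mathfrak{h}\to\mathfrak{g}\to\mathfrak{g}/\mathfrak{h}\to 0$ of $\IT_{H}$-modules one gets $\det_{\IT_{H}}(\mathfrak{g}/\mathfrak{h})=\det_{\IT_{H}}(\mathfrak{g})\cdot\det_{\IT_{H}}(\mathfrak{h})^{-1}$. The first factor is trivial: $\mathfrak{g}=\mathfrak{gl}(W_{r})\oplus\mathfrak{gl}(W_{t})\oplus\mathfrak{gl}(W_{s})$, and $\mathfrak{gl}(V)=V\otimes V^{\vee}$ has $\det\mathfrak{gl}(V)=(\det V)^{\dim V}\otimes(\det V)^{-\dim V}$ trivial for any group, so $\det_{\IT_{H}}(\mathfrak{g})=1$ and the weight of $K_{G/H}$ equals $\det_{\IT_{H}}(\mathfrak{h})^{-1}$.

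The work is then to identify $\mathfrak{h}$ as an $\IT_{H}$-module. Differentiating the description $B=\begin{pmatrix}A&0\\ *&C\end{pmatrix}$ of $H$ gives $\mathfrak{h}\cong\mathfrak{gl}(W_{r})\oplus\mathfrak{gl}(W_{s})\oplus\Hom(W_{r},W_{s})$, the last summand being the $*$-block, on which $\IT_{H}$ acts by $\phi\mapsto T_{C}\phi T_{A}^{-1}$; the point that needs care is that on $\IT_{H}$ one has $T_{B}=\operatorname{diag}(T_{A},T_{C})$, so in this $\Hom$ the source $W_{r}$ is acted on through $T_{A}$ and the target $W_{s}$ through $T_{C}$. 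The two $\mathfrak{gl}$ summands contribute the trivial determinant character as above, while $\Hom(W_{r},W_{s})=W_{s}\otimes W_{r}^{\vee}$ has $\det=(\det W_{s})^{r}\otimes(\det W_{r})^{-s}$, which on $\IT_{H}$ is $(\det\IT_{C})^{r}(\det\IT_{A})^{-s}$. Hence $\det_{\IT_{H}}(\mathfrak{h})=\dfrac{(\det\IT_{C})^{r}}{(\det\IT_{A})^{s}}$, so the weight of $K_{G/H}$ is $\dfrac{(\det\IT_{A})^{s}}{(\det\IT_{C})^{r}}$, in agreement with the weight of $\sigma_{1,\cdots,r}$ found in the previous subsection.

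It remains to recognise $\dfrac{(\det\IT_{A})^{s}}{(\det\IT_{C})^{r}}$ as the determinant of the restricted torus. On $\IT_{H}$ one has $\det\IT_{B}=\det\IT_{A}\cdot\det\IT_{C}$, so this character can equally be written $\dfrac{(\det\IT_{B})^{s}}{(\det\IT_{C})^{t}}=\dfrac{(\det\IT_{A})^{t}}{(\det\IT_{B})^{r}}$: the canonical weight is assembled purely from the determinants of the three diagonal blocks of $\IT_{H}$, with no contribution from the interior Cartan directions, and this cancellation is the substance of the statement. The one genuine obstacle is keeping track of the $\IT_{H}$-module structure — not just the vector-space structure — of $\mathfrak{h}$, equivalently of $\mathfrak{g}/\mathfrak{h}\cong\mathfrak{gl}(W_{r})\oplus\mathfrak{gl}(W_{s})\oplus\Hom(W^{\prime}_{s}\subset W_{t},W_{r})$: because $\IT$ itself does not act on these spaces, one must use the splitting $T_{B}=\operatorname{diag}(T_{A},T_{C})$ throughout, after which the determinant computations are routine.
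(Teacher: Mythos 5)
Your proof is correct and takes essentially the same approach as the paper: both reduce the weight of $K_{G/H}$ to the determinant of the $\IT_{H}$-action on $\mathfrak{g}/\mathfrak{h}$, observe that the square $\mathfrak{gl}(W_{r})$- and $\mathfrak{gl}(W_{s})$-blocks contribute trivially, and conclude that only an $r\times s$ off-diagonal block survives, giving $(\det\IT_{A})^{s}/(\det\IT_{C})^{r}$. Your only (cosmetic) deviation is to obtain this as $\det_{\IT_{H}}(\mathfrak{h})^{-1}$ via the exact sequence $0\to\mathfrak{h}\to\mathfrak{g}\to\mathfrak{g}/\mathfrak{h}\to 0$, whereas the paper reads the same weights directly off the block-triangular model of $\mathfrak{g}/\mathfrak{h}$ inside $\mathfrak{gl}(W_{t})$.
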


\begin{proof}
In this case of the Cramer variety $\Cr(r,t,s)$, there are $rt+s^{2}$ weights that are weights of $G$ but not of $H$ and the product of those weight is $\dfrac{(\det\IT_{A})^{s}}{(\det\IT_{C})^{r}}$. 

In the matrix below, each asterisk block represents a collection of  weight spaces of $G$ that are not the weight spaces for $H$,
\[
\left(\begin{array}{c|c}
\ast_{r\times r} &  \ast_{r\times s} \\ \hline
0 &\ast_{s\times s}   \\ 
     
	\end{array}\right).
\]
When we multiply these weights then the only contribution comes from the top right $r\times s$ block because the product of all the  weights in the $r\times r$ and $s\times s$ square blocks is $1$.

This shows that the canonical differential is a multiple of the  determinant $\dfrac{(\det\IT_{A})^{s}}{(\det\IT_{C})^{r}}$ under the action of $\IT_{H}$ on $\mathfrak{g/h}$.
\end{proof}
\subsection{Special case $\Cr(2,4,2)= \OGr(5,10)$}
Let $M$ and $N$ be $2\times 4$ and $4\times 2$ matrices respectively given by 
\[
 M =
 \begin{pmatrix}
  m_{11} & m_{12} & m_{13} & m_{14} \\
  m_{21} & m_{22} & m_{23} & m_{24} \\
   \end{pmatrix}
   \qquad
\text{and }
\qquad
 N =
 \begin{pmatrix}
  n_{11} & n_{12}  \\
n_{21} & n_{22}  \\
n_{31} & n_{32}  \\
  n_{41} & n_{42} \\
 \end{pmatrix}.
\]
We define a variety $V \subset \IC^{(2\times 4)+(4\times 2)}$ by the two sets of equations
$$
MN=0 \qquad \text{ and } \qquad  \bigwedge^{2}M=\bigwedge^{2}N.
$$

If $M$ is not of maximal rank then it follows from the second equation that $N$ must be of rank less than $2$. This is locus of codimension greater than or equal to $2$ so we are not worried about this locus when calculating the canonical class of the variety. 

We can assume the first entry $m_{11}$ of $M$ is nonzero. We can use that to solve the top row of $N$ and $m_{22}$, $m_{23}$ and $m_{24}$ in terms of remaining entries of $M$ and $N$. Similarly if we assume the entry $m_{21}$ of $M$  is nonzero then we can use that to eliminate the first row of $N$ and $m_{12}$, $m_{13}$ and $m_{14}$ in terms of remaining entries of $M$ and $N$.

Let $U_{m_{11}}$ and $U_{m_{21}}$  be the two charts for $V$ with coordinates given above. These two charts differ by three coordinates and the Jacobian determinant is given by $\bigg(\dfrac{m_{11}}{m_{21}}\bigg)^{3}$.

We know that
 $$
\cO(K_{V})=\bigwedge^{11}\Omega^{1}_{V} \text{ and }
\cO(K_{V})\mid_{U_{m_{11}\neq 0}}=\cO_{U_{m_{11}\neq 0}}\cdot \sigma_{11} 
$$
where $\sigma_{11}=\big( dm_{11}\wedge \dots \wedge dm_{14} \wedge dm_{21} \wedge dn_{21}  \wedge dn_{22} \wedge dn_{31} \wedge dn_{32} \wedge dn_{41} \wedge dn_{42} \big)/(m_{11})^{3}$.  Similarly 
$\sigma_{12}=\big( dm_{21}\wedge \dots \wedge dm_{24} \wedge dm_{11} \wedge dn_{21}  \wedge dn_{22} \wedge dn_{31} \wedge dn_{32} \wedge dn_{41} \wedge dn_{42} \big)/(m_{21})^{3}$, with $\sigma_{11}=\sigma_{12}$ and repeating the same calculation gives that $\sigma= \sigma_{ij}$ is independent of $ij$. Since $\sigma_{ij}$ is a basis for $\Omega_{V}^{11}$ on $V_{m_{ij}}$ (no zeros or poles, exactly because of the $m_{ij}$ in the denominator), we have
$$
K_{V}=divisor(\sigma)=0.
$$
There are $16$ variables and $10$ equations, each of
them of $4$ terms. That makes it similar to $\OGr(5,10)$.
One checks that the two sets of equations
and the two varieties are identical, although $\Cr(2,4,2)$ is
only quasi-homogeneous under $G = \GL(2) \times \GL(4) \times \GL(2)$
with an open orbit with complement $V_{0}$ of codimension $2$. 
The relation between $\OGr(5,10)$ and $\Cr(2,4,2)$ seems to be
an intriguing sporadic phenomenon that has possibly not been noticed before.

\section{Acknowledgements}
 This work was mainly carried out during my  PhD studies at the University of Warwick, UK under the supervision of Professor Miles Reid, to whom I would like to express my deepest gratitude. 

%\section*{References}

\end{document}